\documentclass[preprint,11pt]{elsarticle}

\usepackage{amssymb, amsmath, amsthm, amsfonts}
\usepackage{extsizes}
\usepackage{amssymb}
 \usepackage{amsmath}
 \usepackage{amsthm}
 
  \usepackage{xcolor}
  
\newtheorem{theorem}{Theorem}[section]

\newtheorem{proposition}[theorem]{Proposition}

\theoremstyle{definition}

\def\r{\mathbb R}
\def\l{\mathbb L}

\begin{document}

\begin{frontmatter}
 
\title{The Newton's problem assuming non-constant density of the fluid}
 
\author{Rafael L\'opez} 
\ead{rcamino@ugr.es}
\address{Department of Geometry and Topology. University of Granada. 18071 Granada, Spain}

\begin{abstract}
This paper investigates the  Newton's problem of minimal resistance for a body moving through a fluid whose density decreases exponentially with altitude.   We prove the local existence and regularity of radial solutions $u(r)$ satisfying the initial conditions $u(0)=u'(0)=0$ using a fixed-point theorem. We show that the maximal domain of the solution is finite, $[0, r_M)$,  terminating at a critical slope $u'(r_M) = \frac{1}{\sqrt{3}}$.  
\end{abstract}
 
\begin{keyword} Newton problem \sep minimum resistance \sep Banach fixed point theorem\sep phase plane


 \MSC 49Q10 \sep 49K30 \sep 52A15 

\end{keyword}

\end{frontmatter}

\section{A variant of Newton's problem}

Newton proposed in his {\it Principia Mathematica} the problem of minimal resistance: find the shape of a solid body which offers minimal resistance while moving with constant velocity through a homogeneous fluid. However, in the case where the fluid is   air, this hypothesis is not realistic because it is known that the density of the Earth's atmosphere decreases exponentially   as altitude increases. This is due to the fact that both pressure and temperature generally decrease, leading to a rapid reduction in air density. If the $z$-coordinate indicates the height with respect to a reference plane, the density $\rho(z)$ of the air follows the law $\rho(z)=\rho_0e^{-c z}$, where $c>0$ is a scale height and $\rho_0$ is the atmospheric density at sea level $z=0$ ($\rho_0 = 1.225~\mathrm{kg/m^3}$) \cite{an,ah,wa}.  This stratification is a key factor in the design of aerospace vehicles, as the resistance encountered by a body changes fundamentally as it traverses different atmospheric layers. In the literature, other variants of Newton's problem have been studied by assuming that the distribution of the velocity of the particles is centrally symmetric \cite{pt} or   thermal motion of medium particles \cite[Ch. 3]{pl}.

In this paper, we will assume this hypothesis on the exponential decay of the density of the air along one spatial  coordinate and that the body moves along this coordinate. We will derive  the  resistance of the body following the standard approach of the classical model. We refer the reader to the surveys of the problem \cite{bu,bk} and the references therein. Consider Cartesian coordinates $(x,y,z)$ in Euclidean space $\r^3$ and suppose that a body $\mathcal{B}$ moves along the $z$-coordinate. After rescaling, we assume that as $\mathcal{B}$ moves up, the density decreases as $e^{-z}$. Thus, the velocity of the particles is $\vec{v}_i=\vec{u}$, where $\vec{u}=(0,0,-1)$ and we identify the mass $m_i$ of a particle with its density $e^{-z}$. When the particles collide with the surface $S$ of the body  $\mathcal{B}$, they transfer its normal momentum, assuming there is no friction between the fluid and   $S$. At the impact point of the particle with $S$, the model satisfies the sine-squared pressure law. Suppose that $S$ is the graph of a piecewise smooth function $u\colon\Omega\to\r$ defined on a domain $\Omega\subset\r^2$. The  initial value of momentum is $\vec{m}_i=m_i\vec{v}_i$. Then there is an angle  $\varphi$ such that 
$\langle\vec{u},N\rangle=\sin\varphi$, where $N$ is the unit normal vector to $S$. Decompose $\vec{v}_i$ into its tangential and normal parts on $S$, 
 $\vec{v}_i=|\vec{v}_i|(\cos\varphi T-\sin\varphi N)$. When a particle collides with $S$, the final velocity $\vec{v}_f$ is  $\vec{v}_f=|\vec{v}_i|(\cos\varphi T+\sin\varphi N)$. The resistance due to the particle is given by  
 $$m_i\langle \vec{v}_f-\vec{v}_i,\vec{v}_i\rangle=2m_i\sin\varphi \langle N,\vec{v}_i\rangle=2m_i(\sin\varphi)^2=2e^{-u}(\sin\varphi)^2.$$
 Since  $N=(-Du,1)/\sqrt{1+|Du|^2}$, then $(\sin\varphi)^2=1/(1+|Du|^2)$. 
 Assuming also that the particles after hitting against $S$ do not touch   $S$ again, the  resistance   is given by the integral
 \begin{equation}\label{eq1}
 E[u]=\int_\Omega \frac{e^{-u}}{1+|Du|^2}\, dxdy.
 \end{equation}
 Although the integrand in \eqref{eq1} is non-convex with respect to the gradient -- a feature shared with the classical Newton functional -- the presence of the exponential weight $\rho(u)=e^{-u}$ introduces a significant structural change in the associated Euler-Lagrange equation. It is straightforward that the Euler-Lagrange equation of $E$ is 
 \begin{equation}\label{q0}
2(1+|Du|^2)\Delta u 
- 8 D^2u(Du,Du)
= (1+|Du|^2)(1+3|Du|^2).
\end{equation}
 Recall that in the classical model, the equation is  $(1+|Du|^2)\Delta u-4 D^2u(Du,Du)=0$.

 In this paper, we focus on the radial solutions of \eqref{q0}. We will prove that there are radial solutions of \eqref{q0} that intersect the rotation axis orthogonally (Thm.  \ref{t1}). This makes a significant difference with respect to the classical model, where the radial solutions cannot reach the rotation axis orthogonally unless in the trivial case where $u$ is a constant function. It is important to emphasize that our study focuses on the existence and regularity of classical $C^2$ extremals rather than the determination of a global minimizer for the functional $E[u]$. In the classical Newton problem, it is well known that the absolute minimizer lacks $C^2$ regularity, typically requiring a jump in the derivative (such as a frontal flat disk) to achieve minimum resistance.  By establishing these smooth radial solutions, we show that the exponential decay of density allows for regular geometries at the axis of symmetry that are impossible in the classical homogeneous model. Specifically, we prove that these smooth radial solutions exhibit a monotone profile where the slope $u'$ increases from zero at the axis of symmetry until it reaches a critical value $1/\sqrt{3}$ (Thm. \ref{t2}).

  On the other hand, we also study the maximal domain of the radial solutions, proving that this domain is a bounded interval   where the slope of the solution at the end point of the interval is finite (Thm.  \ref{t2}).  As in the   classical problem, the unconstrained minimum resistance problem has no solution in the class of functions $\mathcal{C}(B_R)=\{u\in C^1( \overline{B}_R(0))\colon u\geq 0\}$. For this, consider in polar coordinates the cone $u_\lambda(r)=\lambda R-\lambda r$, $\lambda>0$. Then  $u_{\partial B_R(0)}=0$, $u\geq 0$ and   $ \lim_{\lambda\to \infty}E[u_\lambda]= 0$.

\section{The radial case}\label{s2}

Suppose that $S$ is a surface of revolution about the $z$-axis and that its generating curve is $\gamma(r)=(r,0, u(r))$, where 
$r\in (a,b)$ for some $0\leq a<b$.  The Euler-Lagrange equation \eqref{q0} becomes
 \begin{equation}\label{ur}
 \frac{d}{dr}\left(\frac{r u'}{(1 + u'^2)^2}\right) = \frac{r (1+3u'^2)}{2(1 + u'^2)^2},
\end{equation}
or in expanded form, 
\begin{equation}\label{ur2}
r \left( 1 + 4 u'^2 + 3 u'^4 \right) - 2 u' \left( 1 + u'^2 \right) + 2 r u'' \left( 3 u'^2 - 1 \right) = 0.
\end{equation}
 In the classical model, the right hand-side of \eqref{ur} is $0$ which implies that either $u$ is constant or the solution   is defined away from $0$ \cite{go}.  
 
In our model of non-constant density, it is not possible to obtain a first integral of \eqref{ur}.  In fact,  a constant function is not a solution of \eqref{ur}. This raises the possibility that the function $u$ can intersect the $u$-axis orthogonally, which cannot occur in the classical model. In terms of \eqref{ur2}, the initial conditions  are $u(0)=u_0$ and $u'(0)=0$. However, equation \eqref{ur2} is singular at $r=0$, so classical ODE techniques do not guarantee the existence of solutions. We prove the existence using a fixed point argument. Since if $u(r)$ is a solution of \eqref{ur2}, then $u(r)+\lambda$ is also a solution, we can assume without loss of generality that the initial condition at $r=0$ is  $u(0)=0$.

\begin{theorem}\label{t1}
The initial value problem \eqref{ur2} with initial conditions $u(0)=0$ and $u'(0)=0$ has a solution $u\in C^2([0,R])$ for some $R>0$. 
\end{theorem}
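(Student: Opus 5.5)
We rewrite \eqref{ur} as an integral equation for $v=u'$ and obtain $v$ as a fixed point of a contraction on a small interval. Integrating \eqref{ur} from $0$ to $r$ and using $u'(0)=0$ gives
\[
\frac{r\,v(r)}{(1+v(r)^2)^2}=\int_0^r \frac{s\,(1+3v(s)^2)}{2(1+v(s)^2)^2}\,ds .
\]
The function $g(t)=t/(1+t^2)^2$ satisfies $g'(t)=(1-3t^2)/(1+t^2)^3$. It is therefore a diffeomorphism from $(-1/\sqrt3,1/\sqrt3)$ onto its image, and $g'(0)=1$. For small $v$ we may write $v(r)=g^{-1}\big(\tfrac1r\int_0^r \tfrac{s(1+3v^2)}{2(1+v^2)^2}ds\big)$. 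We therefore define
\[
(Tv)(r)=g^{-1}\!\left(\frac1r\int_0^r \frac{s(1+3v(s)^2)}{2(1+v(s)^2)^2}\,ds\right),\qquad (Tv)(0)=0,
\]
on the closed set $X=\{v\in C([0,R]) : v(0)=0,\ \|v\|_\infty\le \delta\}$ with the sup norm, where $\delta<1/\sqrt3$ is fixed, say $\delta=1/4$.

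\textbf{$T$ maps $X$ into $X$.} The integrand is bounded by $Cs$, so the argument of $g^{-1}$ is at most $C r/2\le CR/2$. For $R$ small this lies in a fixed compact subinterval of $g((-\delta,\delta))$, hence $|Tv|\le\delta$. The argument tends to $0$ as $r\to0$, so $Tv$ is continuous with $Tv(0)=0$.

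\textbf{$T$ is a contraction.} Let $h(t)=(1+3t^2)/(2(1+t^2)^2)$; it is Lipschitz on $[-\delta,\delta]$ with constant $L$. Then
\[
\left|\frac1r\int_0^r s\,(h(v_1)-h(v_2))\,ds\right|\le \frac{L r}{2}\,\|v_1-v_2\|_\infty .
\]
Since $g^{-1}$ is Lipschitz on the relevant compact set with constant $K$, we get $\|Tv_1-Tv_2\|\le \tfrac{KLR}{2}\|v_1-v_2\|$, which is a contraction for $R$ small. Banach's fixed point theorem then gives a unique $v\in X$ with $Tv=v$. We set $u(r)=\int_0^r v$, so $u(0)=0$ and $u'(0)=0$.

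\textbf{Regularity.} This is the main obstacle, because the equation is singular at $r=0$. For $r>0$ the identity $g(v)=\frac1r\int_0^r s\,h(v)\,ds$ has a $C^1$ right-hand side, and $g^{-1}$ is smooth, so $v\in C^1((0,R])$. Differentiating $r\,g(v)$ recovers \eqref{ur}, and since $g'(v)\neq0$ (that is, $3v^2\ne1$) this is equivalent to \eqref{ur2}. At $r=0$, write $F(r)=\frac1r\int_0^r s\,h(v(s))\,ds$; the continuity of $h(v)$ with $h(v(0))=\tfrac12$ gives $F(r)=r/4+o(r)$. Hence $v(r)=g^{-1}(F(r))=r/4+o(r)$, so $v'(0)=1/4$. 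To see that $v'$ is continuous at $0$, compute
\[
F'(r)=h(v(r))-\frac{1}{r^2}\int_0^r s\,h(v(s))\,ds \;\longrightarrow\; \tfrac12-\tfrac14=\tfrac14 ,
\]
so $v'(r)=F'(r)/g'(v(r))\to 1/4$. Therefore $u\in C^2([0,R])$ with $u''(0)=1/4$. This value is consistent with \eqref{ur2} in the limit $r\to0$: dividing by $r$ gives $1-2u''(0)-2u''(0)=0$, i.e. $u''(0)=1/4$.
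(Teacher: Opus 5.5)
Your proposal is correct and follows the paper's route: a Banach fixed-point argument for the once-integrated form of \eqref{ur}, obtained by inverting $x/(1+x^2)^2$ near $0$. Working with $v=u'$ in $C([0,R])$ rather than with $u$ in $C^1([0,R])$ is only a cosmetic change, since the paper's operator depends on $u'$ alone. Your regularity step at $r=0$ is more complete than the paper's: you derive $v(r)=r/4+o(r)$ and then show $v'(r)\to 1/4$, whereas the paper's L'H\^{o}pital argument presupposes that $u''(0)$ exists.
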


\begin{proof} Define 
$$f(x)=\frac{x}{(1+x^2)^2} ,\qquad g(x)=\frac{1+3x^2}{2(1+x^2)^2},$$
with $f:(-m_0,m_0)\to (-m_1,m_1)$, where $m_0=\frac{1}{\sqrt{3}}$ and $m_1= \frac{3\sqrt{3}}{16}$, and $g:\r\to\r$. The function $f$ is one-to-one and   increasing. A function $u\in C^2([0,R])$ is a solution of our initial value problem if and only if
 $$u(r)= \int_0^rf^{-1}\left(\frac{1}{s}\int_0^s tg(u'(t))\, dt\right)\, ds.$$
 Fix $R>0$, to be chosen later. For $u\in C^1([0,R])$, define the operator 
$$
( \mathsf{T} u)(r)=\int_0^r f^{-1}\left( \int_0^s\frac{t}{s}g(u'(t))\, dt\right)\, ds.
$$
It is clear that a fixed point  $\mathsf{T}u=u$ solves our initial value problem. We apply the Banach fixed point theorem in a closed ball $\overline{\mathcal{B}(0,\epsilon)}\subset C^1([0,R])$ around the function $u=0$ for suitable small $\epsilon>0$. Here, $C^1([0,R])$ is endowed with the norm $\|u\|=\|u\|_\infty+\|u'\|_\infty$. Let us fix $\epsilon>0$ with $\epsilon<m_0$. Since $f^{-1}$ is increasing  and $f^{-1}(0)=0$, let $m_2>0$ be such that $f^{-1}(m_2)=\frac{\epsilon}{2}$. We restrict $f$ and $g$ to   $[-\frac{\epsilon}{2},\frac{\epsilon}{2}]$. Moreover, the range of $g$ is $(0,\frac{9}{16}]$, in particular, $|g|<1$

As a first choice, take $R<2m_2 $, although  $R$ will be modified later. Note that $R<1$. Then $ \mathsf{T}$ is well-defined. Indeed, since $|g|<1$, we have 
$ \left|\int_0^s\frac{t}{s}g(u'(t))\, dt\right|\leq \frac{s}{2}\leq \frac{R}{2}<m_2$. Then we can apply $f^{-1}$.  We perform the proof in two steps.

\begin{enumerate}
\item We prove that $ \mathsf{T}(\overline{\mathcal{B}(0,\epsilon)})\subset \overline{\mathcal{B}(0,\epsilon)}$. Since $ f^{-1}$ is increasing, $|g|<1$ and $R<1$, we have

\begin{equation*}
|( \mathsf{T} u)(r)| \leq \int_0^r f^{-1}\left(\int_0^s\frac{t}{s}\, dt\right)\, ds 
\leq \int_0^r f^{-1}\left(\frac{s}{2}\right)\, ds \leq \frac{\epsilon}{2}r\leq\frac{\epsilon}{2}.
\end{equation*}
 Thus $\| \mathsf{T} u\|_\infty\leq\frac{\epsilon}{2}$. Similarly, we have
\begin{equation*}
 |( \mathsf{T} u)'|\leq f^{-1}\left(\left|\int_0^r\frac{t}{r}g(u'(t))\, dt\right|\right)\leq \left| f^{-1} (\frac{r}{2} )\right| \leq\frac{\epsilon}{2}.
\end{equation*}
 Then $\|( \mathsf{T} u)'\|_\infty\leq \frac{\epsilon}{2}$, proving  that $\| \mathsf{T} u \|\leq \epsilon$. 
\item We prove that $ \mathsf{T}\colon \overline{\mathcal{B}(0,\epsilon)}\to \overline{\mathcal{B}(0,\epsilon)}$ is a contraction.
 Let $L_{ f^{-1}}$ and $L_g$ be the Lipschitz constants of $ f^{-1}$ and $g$ in  $[-\frac{\epsilon}{2},\frac{\epsilon}{2}]$.  Given $u,w\in \overline{\mathcal{B}(0,\epsilon)}$ and $r\in [0,R]$, we have
\begin{equation}\label{c1}
\begin{split}
|( \mathsf{T} u)(r)-( \mathsf{T}w)(r)|&\leq L_{ f^{-1}} \left|\int_0^r \left(\int_0^s\frac{t}{s}(g(u')-g(w'))\, dt\right) \, ds\right|\\
 & \leq L_{ f^{-1}}L_g \|u'-w'\|_\infty\int_0^r(\int_0^s\frac{t}{s}\, dt)\, ds\\
 &=\frac14 L_{ f^{-1}}L_g  R^2  \|u'-w'\|_\infty\leq \frac14 L_{ f^{-1}}L_g  R^2  \|u-w\|.
\end{split}
\end{equation}
Analogously, 
\begin{equation}\label{c2}
\begin{split}
|( \mathsf{T} u)'(r)-( \mathsf{T}w)'(r)| &\leq \left| f^{-1}\left(\int_0^r \frac{t}{r}(g(u')-g(w'))\, dt\right)\right|\\
&\leq L_{ f^{-1}}L_g \|u'-w'\|_\infty \int_0^r \frac{t}{r}\, dt= L_{ f^{-1}}L_g \frac{r}{2} \|u'-w'\|_\infty \\
&\leq \frac12 L_{ f^{-1}}L_g  R \|u'-w'\|_\infty\leq \frac12 L_{ f^{-1}}L_g R\|u-w\|.
 \end{split}
\end{equation}
We need to modify $R$ by taking  
$R\leq \min\{2m_2,\frac{1}{\sqrt{ L_{ f^{-1}}L_g}},\frac{1}{ 2L_{ f^{-1}}L_g}\}.$
From \eqref{c1} and \eqref{c2}, we deduce that $\|\mathsf{T} u-\mathsf{T}w\|_\infty\leq\frac14\|u-w\|$ and $\|(\mathsf{T} u)'-(\mathsf{T}w)'\|_\infty\leq\frac14\|u-w\|$, respectively. This gives $\| Tu-Tw\|\leq \frac12\|u-w\|$, hence $\mathsf{T}$ is a contraction.

\end{enumerate}
 Finally, we write \eqref{ur2} as
 \begin{equation}\label{ur3}
 u''=-\frac{1+4u'^2+3u'^4}{2(3u'^2-1)} +\frac{u'(1+u'^2)}{r (3u'^2-1)}.
 \end{equation}
 The $C^2$-regularity up to $0$ of $u(r)$ is verified by using the  L'H\^{o}pital  rule, because we have $u''(0)=\frac12-\lim_{r\to 0}\frac{u'(r)}{r}=\frac12-u''(0)$, 
hence $u''(0)=\frac14$. 
\end{proof}

  We now show that the maximal domain of the radial solution $u$ is finite. This occurs because at the end of this interval, the coefficient $3u'^2-1$ of $u''$ in \eqref{ur2}   vanishes.

\begin{theorem}\label{t2} The maximal domain of a solution $u(r)$ of \eqref{ur2} with initial conditions $u(0)=u'(0)=0$ is   a bounded interval    $[0,r_M)$, where   $\lim_{r\to r_M}u'(r)=\frac{1}{\sqrt{3}}$. We also have the numerical values $r_M\approx 1.230$ and $u(r_M)\approx 0.228$.
\end{theorem}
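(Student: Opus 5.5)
The plan is to work with the slope $p=u'$ and use the integrated form \eqref{ur} of the equation, exactly as in the proof of Theorem \ref{t1}. With $f,g$ as defined there, the equation reads
\begin{equation*}
f(p(r))=\frac{1}{r}\int_0^r t\,g(p(t))\,dt,\qquad f'(p)\,p'=g(p)-\frac{f(p)}{r}.
\end{equation*}
Here $f'(p)=(1-3p^2)/(1+p^2)^3$, so the equation is regular for $r>0$ as long as $0\le p<m_0=1/\sqrt3$.

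\textbf{Step 1 (elementary facts on $f$ and $g$).} A direct computation gives $g'(x)=x(1-3x^2)\cdot(\text{positive factor})$. Hence $g$ is increasing on $[0,m_0]$, and there $\frac12\le g\le g(m_0)=\frac{9}{16}$. Also $f$ is increasing on $[0,m_0]$, with $0\le f\le m_1=\frac{3\sqrt3}{16}$.

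\textbf{Step 2 (sign and monotonicity of $p$).} By Theorem \ref{t1}, $u''(0)=\frac14>0$, so $p>0$ and $p'>0$ on some $(0,\delta)$. Positivity of $p$ holds on the whole interval where $p<m_0$, since $f(p(r))$ is an average of positive quantities.

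For monotonicity I use a continuity argument. Suppose $r_1$ is the first point with $p'(r_1)=0$ and $p(r_1)<m_0$. Since $p$ is increasing on $[0,r_1]$, we have $g(p(t))\le g(p(r_1))$ for $t\le r_1$. This gives
\[
f(p(r_1))\le \frac{r_1}{2}\,g(p(r_1))<r_1\,g(p(r_1)).
\]
Then $f'(p)p'=g-f/r>0$ at $r_1$, a contradiction. Hence $p$ is strictly increasing as long as $p<m_0$, and the inequality $f(p)\le \frac r2 g(p)$ persists there.

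\textbf{Step 3 (finite maximal domain).} Using $g\ge\frac12$ gives $f(p(r))\ge \frac1r\int_0^r \frac t2\,dt=\frac r4$. Since $f\le m_1$, the solution cannot keep $p<m_0$ beyond $r=4m_1=\frac{3\sqrt3}{4}$. So $r_M\le 3\sqrt3/4$ is finite.

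Since $p$ is increasing and bounded by $m_0$, the limit $L=\lim_{r\to r_M}p(r)\le m_0$ exists. If $L<m_0$, then on $[r_M/2,r_M)$ the right-hand side of \eqref{ur3} is bounded and smooth in $(r,p)$. Standard ODE continuation would then extend the solution past $r_M$, contradicting maximality. Therefore $L=m_0$.

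To see that the solution genuinely cannot be continued in $C^2$, pass to the limit in Step 2. This gives
\[
g(p)-\frac{f(p)}{r}\ge \frac{g(p)}{2}\ge\frac14,
\]
while $f'(p)\to0^+$. Hence $p'\to+\infty$ as $r\to r_M$, so $u''$ blows up, consistent with the vanishing of the coefficient $3u'^2-1$ in \eqref{ur2}. Finiteness of $u(r_M)=\int_0^{r_M}p$ is immediate because $p$ is bounded.

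\textbf{Step 4 (numerics).} The values $r_M\approx1.230$ and $u(r_M)\approx0.228$ come from numerically integrating \eqref{ur3}. I would start from a small $r_0$ using the Taylor data $u\approx r^2/8$, $u'\approx r/4$, and stop when $u'$ reaches $1/\sqrt3$. These values are consistent with the a priori bound $r_M\le 3\sqrt3/4\approx1.299$.

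The main obstacle is Step 2. Monotonicity of $p$ is what both prevents $p$ from turning back and yields the lower bound on $g-f/r$ near $r_M$. The first-critical-point argument combined with the monotonicity of $g$ is the route I would try first.
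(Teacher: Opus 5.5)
Your argument is correct, and it takes a genuinely different route from the paper's.

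\textbf{The paper's route.} The paper gets $u'>0$ from a first-zero argument: at a first zero of $u'$, \eqref{ur2} would force $u''=\frac12$. It then passes to the arc-length system \eqref{sys} in the $(x,\theta)$ phase plane. By linearizing the rescaled system \eqref{sys2}, it identifies $P_1=(\frac{1}{\sqrt3},\frac{\pi}{6})$ as a center. Finally it concludes ``by inspection of the phase plane'' that the orbit from $(0,0)$ reaches the line $\theta=\frac{\pi}{6}$ at a finite value of $x$.

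\textbf{Your route.} You stay with $p=u'$ and the integrated identity $rf(p)=\int_0^r t\,g(p)\,dt$. The single fact $g'(x)=x(1-3x^2)/(1+x^2)^3$, so that $g$ increases from $\frac12$ to $\frac{9}{16}$ on $[0,m_0]$, bootstraps to the two-sided comparison $\frac r4\le f(p(r))\le\frac r2\,g(p(r))$ on all of $[0,r_M)$. From this you obtain, with actual proofs:
\begin{itemize}
\item strict monotonicity of $p$, hence convexity of $u$ (the paper reads this off from $\theta'>0$);
\item the bound $r_M\le 4m_1$;
\item the limit $u'\to\frac{1}{\sqrt3}$, by standard continuation;
\item $u''\to+\infty$, so no $C^2$ continuation past $r_M$ exists.
\end{itemize}

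\textbf{What your bounds add.} The estimate $g-f/r\ge\frac14$ is what rigorously excludes the orbit terminating at the degenerate point $P_1$, where $g(m_0)-\sqrt3\,f(m_0)=0$. The paper settles this only by inspecting the picture. Letting $r\to r_M$ in your upper comparison also gives $m_1\le\frac{9}{32}r_M$. Hence $\frac{2}{\sqrt3}\approx1.155\le r_M\le\frac{3\sqrt3}{4}\approx1.299$, which is a rigorous bracket for the numerical value $1.230$.

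\textbf{What the paper's setup buys.} In exchange, the phase-plane approach gives a global qualitative picture of all orbits, including those not reaching the axis and those becoming vertical, which the paper uses in its closing remark. For the statement itself, your analytic argument is the more complete proof.
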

\begin{proof}
From Thm.   \ref{t1}, we know $u''(0)=\frac14>0$, implying  that $u$ increases initially around $r=0$, and $u''$ is positive close to $0$. If $r_1>0$ were the first time that $u'$ vanished, then \eqref{ur2} would yield $u''(r_1)=\frac12$, and $r_1$ would be a minimum of $u$, which is not possible. This proves that $u'>0$ in the maximal domain of $u$. 

In order to continue  the study of the behavior of the solution $u(r)$, it is more convenient to use  parametric coordinates   $x(s)=r$ and $y(s)=u(r)$. We reparametrize   $\gamma(s)=(x(s),y(s))$ with   $x'(s)^2+y'(s)^2=1$. Then   there is and angle $\theta=\theta(s)$ such that $x'(s)=\cos\theta(s)$ and $y'(s)=\sin\theta(s)$. Equation  \eqref{ur3} is equivalent to the ODE system
\begin{equation}\label{p1}
\left\{\begin{split}
x'(s)&=\cos\theta(s),\\
y'(s)&=\sin\theta(s),\\
\theta'(s)&=\cos\theta(s)\frac{x(s)(2-\cos(2\theta(s)))-\sin(2\theta(s))}{2x(s)(2\cos(2\theta(s))-1)}.
\end{split}\right.
\end{equation}
The function $\theta'(s)$ is the curvature of the curve $\gamma(s)$.  Since  $y(s)$ does not appear in the expression for $\theta'$, and   $y(s)$ can be recovered from $\theta(s)$, we can study the behavior of the solutions $\gamma(s)$ by considering the autonomous system 
\begin{equation}\label{sys}
\left\{\begin{split}
x'&=\cos\theta,\\
\theta'&=\cos\theta\frac{x(2-\cos(2\theta))-\sin(2\theta)}{2x(2\cos(2\theta)-1)}.
\end{split}\right.
\end{equation}
 Up to $2\pi$-vertical translations in the $\theta$ variable, the phase plane of \eqref{sys} is given by 
$$\Theta=\{(x,\theta)\colon x\in\r,\theta\in(-\frac{\pi}{6},\frac{\pi}{6})\cup (\frac{\pi}{6},\pi+\frac{\pi}{6})\}.$$
 Each solution $\gamma(s)$ has associated with it unique orbit $\alpha_\gamma(s)=(x(s),\theta(s))$ of \eqref{sys}. Standard ODE theory ensures that through every point of $\Theta$ passes a unique orbit, and distinct orbits do not intersect. Let us observe that the equilibrium points are the lines $\theta= \frac{\pi}{2}+k\pi$. See Fig. \ref{fig1}. However, the flow meets these lines with a constant slope  $-\frac12$, because both equations in \eqref{sys} are multiplied by the same factor $\cos\theta$. On the other hand, and except when $x=\pm \frac{1}{\sqrt{3}}$, the flow meets  the lines $\theta=\pm\frac{\pi}{6}$ orthogonally because $\frac{d\theta}{dx}=\infty$. To analyze the behavior of the flow around the points $P_1=(\frac{1}{\sqrt{3}},\frac{\pi}{6})$ and $P_2=(-\frac{1}{\sqrt{3}},-\frac{\pi}{6})$, we   multiply both equations of \eqref{sys}  by $2x(2\cos(2\theta)-1)$, transforming them into  the autonomous system 
 
\begin{equation}\label{sys2}
\left\{
\begin{split}
 x' &= 2x(2\cos(2\theta)-1)\cos\theta, \\ 
\theta '&= \cos\theta (\sin(2\theta)+(\cos(2\theta)-2)x ).
\end{split}\right.
\end{equation}
Now, the points $P_1$ and $P_2$ are isolated equilibrium points in \eqref{sys2}. The linearization of the system at these points shows that   $P_1$ and $P_2$ are centers. Therefore, the flow around these points consists of  topological circles. As a consequence of the phase plane analysis, the flow crosses $\theta=0$ with slope $\frac12$ because $\frac{d\theta}{dx}=\frac12$ at $\theta=0$. Next, the flow moves toward the line $\theta=\frac{\pi}{6}$ being this value its limit, hence   $u'(r)\to \tan(\frac{\pi}{6})=\frac{1}{\sqrt{3}}$. Since $\theta(s)$ is increasing,  the curvature $\theta'$ of  $\gamma$ is positive in its maximal domain. This proves that the radial solution $u(r)$ is convex. 

By inspection of the phase plane, the trajectories in $\Theta$ terminate at the horizontal line $\theta=\frac{\pi}{6}$, with the value of $x(s)$ being finite in the limit. If $[0,a)$ is the maximal domain of the solutions of \eqref{p1}, then   there is $r_M<\infty$ such that $\lim_{s\to a}x(s)=r_M$.  This value $r_M$ defines the maximal domain $[0,r_M)$ of the solution $u(r)$ of \eqref{ur2} with initial conditions $u(0)=u'(0)=0$. A  graph of a solution $u(r)$ appears in   Fig. \ref{fig1}. The values of $r_M$ and $u(r_M)$ have been calculated numerically with Mathematica.
\end{proof}
 
 \begin{figure}[h]
\begin{center}
\includegraphics[width=.4\textwidth]{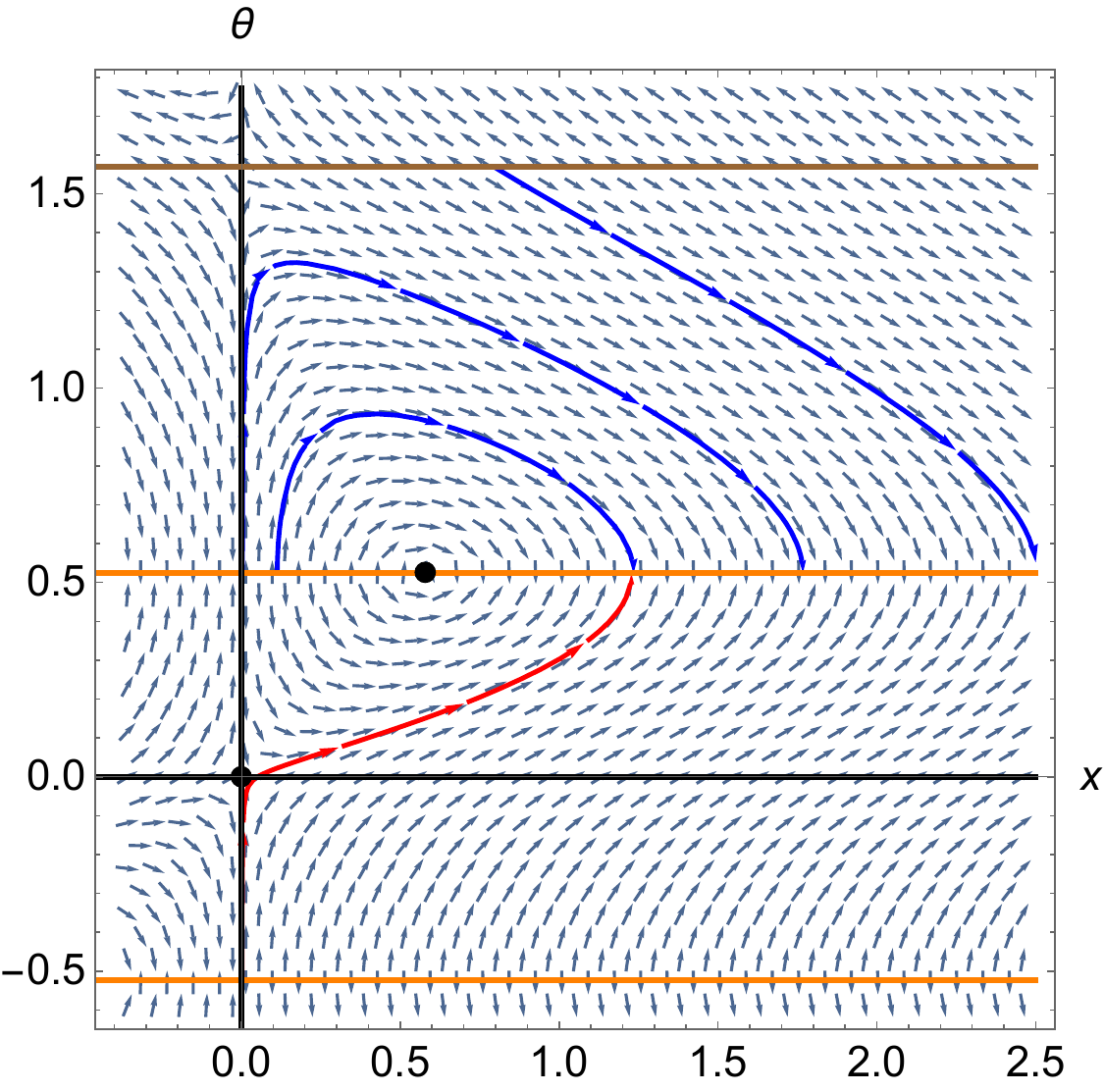} \quad \includegraphics[width=.4\textwidth]{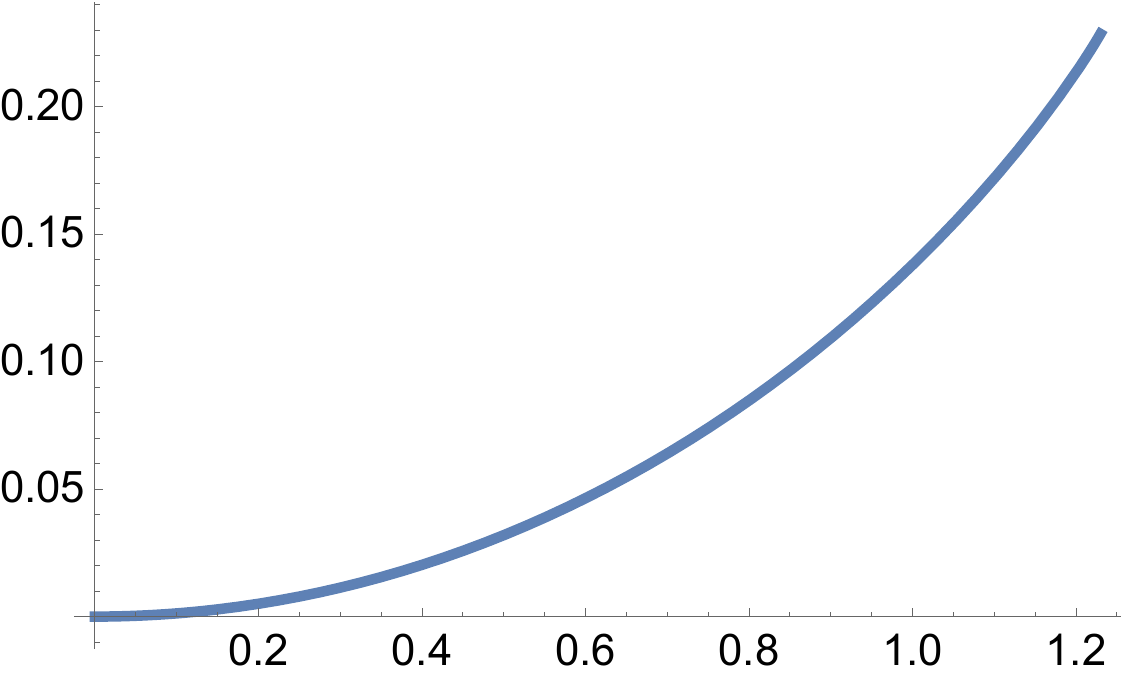}
\end{center}
\caption{Left. The phase plane of \eqref{sys}. The orange lines are $\theta=\pm\frac{\pi}{6}$, the blue line is $\theta=\frac{\pi}{2}$. The red line is a trajectory $\alpha_\gamma$ from the point $(0,0)$ and the blue lines are different trajectories from $(1,\frac{\pi}{2}-\lambda)$ with $\lambda=0.1, 0.5$ and $0.8$.   Right. A radial solution intersecting  orthogonally the rotation axis.}\label{fig1}
\end{figure}

We finish this paper with the following remark.  There exist   solutions   of \eqref{ur} that do not intersect the $y$-axis by taking   $u(r_0)=0$, with $r_0>0$. Around   $(\frac{1}{\sqrt{3}},\frac{\pi}{6})$,  there are trajectories  that  meet $\theta=0$ at two points and   others that meet the line $\theta=\frac{\pi}{2}$ (see blue lines in   Fig. \ref{fig1}).  If $\alpha_\gamma$ meets $\theta=\frac{\pi}{2}$,  the function $u(r)$ is asymptotic to a vertical line.

 \section{Conclusion and outlook}
 
 The present work addressed a physically relevant variation of the classical Newton's problem of minimal resistance, incorporating the exponential decay law of atmospheric density.  Our analysis shows that, unlike the classical model, there exist radial solutions intersecting the rotation axis orthogonally. This property is of particular interest as it allows for the design of streamlined bodies that avoid the structural and stagnation pressure singularities associated with the flat-front designs of the homogeneous case.  These radial solutions  are bounded to a finite domain  terminating when the slope reaches the critical value $ \frac{1}{\sqrt{3}}$. This   study   has direct implications for high-altitude aerodynamics and space vehicle design.  The existence of these $C^2$ extremals provides a theoretical basis for smooth optimal shapes in stratified media.  The fact that the slope of the minimal resistance surface is, up to rescaling, bounded by $\frac{1}{\sqrt{3}}$ at the edge of the domain (Thm.  \ref{t1}) provides a   geometric constraint that should be considered when designing bodies traversing the atmosphere, especially those seeking maximum efficiency in minimizing drag during re-entry or ascent phases.  The existence of $C^2$ extremals meeting the axis orthogonally marks a significant departure from the classical constant-density case. A rigorous proof regarding the optimality of these smooth profiles versus global minimizers represents the next natural step in this research line.


 \section*{Acknowledgements} Rafael L\'opez has been partially supported by MINECO/MICINN/FEDER grant no. PID2023-150727NB-I00, and by the ``Mar\'{\i}a de Maeztu'' Excellence Unit IMAG, reference CEX2020-001105- M, funded by MCINN/AEI/10.13039/ 501100011033/ CEX2020-001105-M.


 \section*{Data availability}  No data was used for the research described in the article.


\begingroup
\small
\setlength{\parskip}{0pt}
\setlength{\itemsep}{0pt}
\setlength{\bibsep}{0pt} 
\bibliographystyle{plain}

\endgroup

\end{document}